\documentclass[12pt,reqno]{amsart}
\usepackage{fullpage,amssymb,amsmath,mathrsfs,enumerate}

\newtheorem{theorem}{Theorem}
\newtheorem{proposition}[theorem]{Proposition}
\newtheorem{corollary}[theorem]{Corollary}

\theoremstyle{definition}
\newtheorem*{definition}{Definition}

\theoremstyle{remark}
\newtheorem{claim}{Claim}

\newcommand\setsep{;\ }
\def\supp{\operatorname{supp}}
\def\restriction{\!\upharpoonright}
\def\er{\mathbb R}

\def\O{\mathcal{O}}
\def\N{\mathcal{N}}
\def\F{\mathcal F}
\def\B{\mathcal B}
\def\en{\mathbb N}
\def\C{\mathcal{C}}
\def \exp {\operatorname{exp}}

\begin{document}
\title{Characterization of compact monotonically ($\omega$)-monolithic spaces using system of retractions}
\author{Marek C\'uth}
\address{Department of Mathematical Analysis \\
Faculty of Mathematics and Physic\\ Charles University\\
Sokolovsk\'{a} 83, 186 \ 75\\Praha 8, Czech Republic}
\email{marek.cuth@gmail.com}
\thanks{The author was supported by the Research grant GA \v{C}R P201/12/0290.} 
\subjclass[2010]{54C15}
\keywords{retraction, monotonically Sokolov space, monotonically monolithic space, monotonically $\omega$-monolithic space}
\begin{abstract}We prove that a compact space is monotonically Sokolov if and only if it is monotonically $\omega$-monolithic. This gives answers to several questions of R. Rojas-Hern{\'a}ndez and V. V. Tkachuk.\end{abstract}
\maketitle
\section{Introduction}
Spaces with a rich family of retractions often occur both in topology and functional analysis. In Banach space theory, using systems of retractions we can obtain a system of projections and consequently find a Markushevich basis; see e.g. \cite{cuth}. In topology, Gul$'$ko used families of retractions in \cite{gul} to prove that a compact space $K$ is Corson whenever $\C_p(K)$ has the Lindel\"of $\Sigma$-property. The method of Gul$'$ko's proof was further studied and precised in \cite{tka07}.

One of the possible concepts of a family of retractions was recently introduced in \cite{tkaRoj}. Spaces having such a system were called monotonically Sokolov and using them, an answer to Problem 3.8 from \cite{tka05} was given.

In this note we give a positive answer to Question 6.3 from \cite{tkaRoj}, i.e. we prove that a compact space is monotonically Sokolov if and only if it is monotonically $\omega$-monolithic.

\begin{theorem}\label{tMain}Let $K$ be a compact space. Then the following conditions are equivalent:
	\begin{enumerate}[\upshape (i)]
		\item $K$ is monotonically monolithic
		\item $K$ is monotonically $\omega$-monolithic
		\item $K$ is monotonically Sokolov
	\end{enumerate}
\end{theorem}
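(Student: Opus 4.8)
The implication (iii) $\Rightarrow$ (i) is already known (monotonically Sokolov spaces are monotonically monolithic, as observed in \cite{tkaRoj}), and (i) $\Rightarrow$ (ii) is trivial since monotone monolithicity formally implies its restriction to countable sets. So the whole content lies in proving (ii) $\Rightarrow$ (iii): a compact monotonically $\omega$-monolithic space is monotonically Sokolov. The plan is to construct, for a compact monotonically $\omega$-monolithic $K$, a monotone Sokolov system of retractions directly from the operator $\O$ witnessing monotone $\omega$-monolithicity. Recall that such an $\O$ assigns to each countable $A\subseteq K$ a countable external network $\O(A)$ for $\overline A$ (monotone in $A$), whereas a Sokolov system must assign to each countable family of closed sets a retraction $r$ onto a second-countable subspace whose image "captures" that family, again monotonically; the delicate point is passing from networks for \emph{closures of countable sets} to \emph{retractions} and to families of \emph{closed sets} rather than points.

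The key technical device I expect to use is the standard "elementary submodel" machinery adapted to this setting: given a countable family $\F$ of closed subsets of $K$, take a countable elementary submodel $M$ containing $K$, $\O$, and $\F$, and let $X_M=\overline{K\cap M}$. One shows $X_M$ is second countable (using that $\O(K\cap M)\in M$ is a countable network and elementarity pins down a countable base), that there is a natural retraction $r_M\colon K\to X_M$, and that $r_M$ and $X_M$ depend monotonically on $M$ (hence on $\F$) because $\O$ is monotone. The first main step is therefore to verify that monotone $\omega$-monolithicity is exactly what makes $X_M$ second countable and makes the assignment $\F\mapsto r_M$ well behaved; the second step is to check the two Sokolov conditions — that $r_M(F)\subseteq$ (the relevant member of the system) for each $F\in\F$, and the monotonicity/$\omega$-continuity of the system under increasing unions — both of which should reduce to elementarity arguments together with the monotonicity of $\O$. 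A clean way to organize this is to first prove a lemma characterizing monotone $\omega$-monolithicity in terms of the existence of a monotone assignment of retractions onto second-countable subspaces with a network-capturing property for countable sets of \emph{points}, and then upgrade "points" to "closed sets" using compactness (a closed subset of a compact space is an intersection of a network-controlled family).

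The main obstacle I anticipate is the monotonicity bookkeeping: a Sokolov system requires the retraction associated to $\F$ to refine the one associated to any subfamily, and to be "$\omega$-continuous" (the retraction for $\bigcup_n\F_n$ determined by those for the $\F_n$), and getting all of this simultaneously from a single operator $\O$ typically forces one to close off $\O$ under finitely many natural operations (taking preimages under already-constructed retractions, adding canonical countable dense sets of images, etc.) and to prove the resulting enlarged operator is still monotone and still witnesses $\omega$-monolithicity. Managing this closure so that it stays countable-valued and monotone — and verifying that the induced retractions are genuinely continuous on the compact space $K$ (here compactness is essential, e.g. via the fact that a continuous map into a compact space extending a map on a dense set need not exist, so one must instead build $r_M$ as a quotient/limit map carefully) — is where the real work will be. Everything else should be a matter of routine verification once the correct enlarged operator $\O$ is in hand.
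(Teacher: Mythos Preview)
Your overall architecture is right, and even the instinct to use an elementary-submodel/Skolem-closure construction is exactly what the paper does. But there is one concrete missing idea that makes the difference between a plan and a proof: you never say where the retraction $r_M$ actually comes from. For a general compact space (even a monotonically $\omega$-monolithic one), taking a countable elementary submodel $M$ and setting $X_M=\overline{K\cap M}$ does \emph{not} by itself give you a retraction $K\to X_M$; the closure of a countable elementary trace need not be a retract. You flag this yourself (``build $r_M$ as a quotient/limit map carefully''), but there is no mechanism on offer, and this is precisely the point that carries the whole argument.

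The paper fills this gap by first invoking Gruenhage's theorem that a compact monotonically $\omega$-monolithic space is Corson, so that one may assume $K\subset\Sigma(\Gamma)$. Once you are inside $\Sigma(\Gamma)$, the retraction is completely explicit: for a suitable countable $A\subset\Gamma$ (namely $A=\supp(M(\F))$, where $M(\F)$ is built by a Skolem-style recursion picking witnesses in $F\cap[\gamma_1,\dots,\gamma_k;U_1,\dots,U_k]$ for $F\in\F$), one sets $r_\F(x)=x\restriction_A$. Continuity and $r_\F(F)\subset F$ are then easy, and the $\omega$-monotonicity reduces to the fact that the Skolem closure $M(\F)$ is $\omega$-monotone by construction. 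Finally, the external network is obtained simply as $\N(\F)=\O(M(\F))$, composing the $\omega$-monolithicity operator with $M$. So the step you identify as ``bookkeeping'' is in fact routine once the retraction exists; the real content is the reduction to the Corson case, which you do not mention.
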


As a consequence answers to Questions 6.4 and 6.5 from \cite{tkaRoj} easily follow.

\begin{corollary}\label{cMain1}If $K$ is a compact Collins-Roscoe space, then it is monotonically Sokolov.
\end{corollary}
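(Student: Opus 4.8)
The plan is to obtain this as an immediate consequence of Theorem~\ref{tMain}, using the (essentially folklore) fact that every Collins--Roscoe space is monotonically monolithic, which for completeness I would verify as follows. Recall that $K$ being Collins--Roscoe means there is an operator assigning to each $x\in K$ a countable family $\mathcal{G}(x)$ of subsets of $K$ such that for every open $U\ni x$ there is an open $V\ni x$ with the property that each $y\in V$ admits a $G\in\mathcal{G}(y)$ satisfying $x\in G\subseteq U$.

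First I would set $\O(A):=\bigcup\{\mathcal{G}(x)\setsep x\in A\}$ for every $A\subseteq K$. This operator is monotone in $A$, it satisfies $\O(\bigcup_{i}A_i)=\bigcup_{i}\O(A_i)$ for an arbitrary family $\{A_i\}$ (in particular for increasing chains), and $|\O(A)|\le|A|\cdot\omega$. Moreover, if $x\in\overline A$ and $U$ is open with $x\in U$, then choosing the open set $V\ni x$ supplied by the Collins--Roscoe operator and picking any $y\in V\cap A$ (which is nonempty since $x\in\overline A$), we get some $G\in\mathcal{G}(y)\subseteq\O(A)$ with $x\in G\subseteq U$; hence $\O(A)$ is an external network for $\overline A$. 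Thus $\O$ witnesses that $K$ is monotonically monolithic.

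It then remains only to invoke the implication (i)$\Rightarrow$(iii) of Theorem~\ref{tMain}, which applies because $K$ is compact, to conclude that $K$ is monotonically Sokolov.

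I do not expect any genuine obstacle: the whole weight of the argument rests on Theorem~\ref{tMain}, while the reduction above is routine. The only mildly delicate point is reconciling the precise form of ``monotonically monolithic'' used in Theorem~\ref{tMain} with the notion obtained from $\O$ (full versus $\omega$-monolithicity, and external network versus network of the subspace $\overline A$) --- but these coincide in the present setting, and in any event the operator $\O$ already yields the strongest of them.
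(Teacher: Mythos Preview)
Your proposal is correct and follows the same route as the paper: show that Collins--Roscoe implies monotonically monolithic and then apply Theorem~\ref{tMain}(i)$\Rightarrow$(iii). The paper simply cites this implication (\cite[Lemma~3.1]{gruen}) rather than verifying it; note also that, with the paper's own definition of a Collins--Roscoe space (where $\bigcup_{x\in A}\O(x)$ is already required to be an external network of $\overline A$), the verification you spell out becomes a one-line tautology, so no reconciliation issue arises.
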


\begin{corollary}\label{cMain2}There exists a compact space $K$ such that it is monotonically Sokolov but not Gul$'$ko.
\end{corollary}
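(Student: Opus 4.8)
\emph{Strategy.} The plan is to derive Corollary~\ref{cMain2} directly from Theorem~\ref{tMain}. By the implication (ii)$\Rightarrow$(iii) of that theorem, every compact monotonically $\omega$-monolithic space is monotonically Sokolov, so it is enough to exhibit a compact space $K$ that is monotonically $\omega$-monolithic (equivalently, monotonically monolithic) but is \emph{not} Gul$'$ko. Thus the entire statement reduces to the fact that the class of monotonically monolithic compact spaces is strictly larger than the class of Gul$'$ko compacta. (Equivalently, one may instead produce a compact Collins-Roscoe space which is not Gul$'$ko and appeal to Corollary~\ref{cMain1}.)

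\emph{The witness.} For $K$ I would take a Corson compact space which is not Gul$'$ko but is monotonically monolithic; such spaces are available. It is convenient to present $K$ as an appropriate closed subspace of $\{0,1\}^{\omega_1}$ all of whose points have countable support, so that $K$ lies in a $\Sigma$-product and is therefore Corson. The verification then has two parts. First, $K$ is not Gul$'$ko, which one checks in the usual way --- for instance by showing that $\C_p(K)$ is not Lindel\"of $\Sigma$, i.e.\ that $K$ carries no $\sigma$-point-finite $T_0$-separating family of cozero sets. Second, $K$ is monotonically $\omega$-monolithic: for a countable $A\subseteq K$ one puts $\mathcal O(A)=\bigcup\{\mathcal N(F):F\in[A]^{<\omega}\}$, where $\mathcal N(F)$ is the countable family consisting of the basic clopen sets and of the canonical ``limit'' sets read off from the finite fragments of the supports of the points of $F$; monotonicity and $\omega$-continuity of $\mathcal O$ are then immediate from this formula, and the substantive point is to verify that $\mathcal O(A)$ is an external network for $\overline A$ in $K$. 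Granting all this, $K$ is monotonically Sokolov by Theorem~\ref{tMain}, and not Gul$'$ko by construction, which is the corollary.

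\emph{Main obstacle.} The delicate step, as always in results of this kind, is the last verification: making a single countable family $\mathcal O(A)$ serve simultaneously as a network at \emph{every} point of the closure $\overline A$ --- and in particular at the limit points of $\overline A$ lying outside $A$ --- without spoiling $\omega$-continuity. This is exactly where an over-optimistic choice fails: the ordinal space $[0,\omega_1]$ is compact, monolithic and not Gul$'$ko, yet a pressing-down argument shows that no monotone operator can recognise its limit ordinals from their finite subsets, so $[0,\omega_1]$ is not monotonically $\omega$-monolithic. Hence the combinatorial structure of $K$ that keeps the behaviour of its limit points under control is doing real work; once the operator has been constructed, Corollary~\ref{cMain2} is immediate from Theorem~\ref{tMain}.
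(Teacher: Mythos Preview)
Your reduction is exactly the one the paper uses: to get Corollary~\ref{cMain2} it suffices to exhibit a compact monotonically monolithic (or Collins--Roscoe) space that is not Gul$'$ko, and then apply Theorem~\ref{tMain} (or Corollary~\ref{cMain1}). The paper carries this out in one line by citing \cite[Example~3.12]{tka13}, which already provides a compact Collins--Roscoe space that is not Gul$'$ko; Corollary~\ref{cMain1} then finishes.

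Where your write-up falls short is in the ``witness'' paragraph. You assert that ``such spaces are available'' and then describe, in generic terms, a closed subspace of $\{0,1\}^{\omega_1}$ with countable supports together with a schematic operator $\mathcal O(A)=\bigcup\{\mathcal N(F):F\in[A]^{<\omega}\}$ --- but you never actually \emph{specify} $K$, nor the families $\mathcal N(F)$, nor do you verify that $\mathcal O(A)$ is an external network for $\overline A$. As you yourself point out, this last check is the delicate step and can fail for plausible candidates (your $[0,\omega_1]$ remark is to the point). So as written the proof has a genuine gap: the existence of the example is asserted, not established. The fix is either to cite the concrete example from \cite{tka13} (as the paper does) or to name a specific $K$ and carry out the network verification in full; the outline you give is not yet a proof.
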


\section{Preliminaries}
We denote by $\omega$ the set of all natural numbers (including $0$). If $X$ is a set then $\exp(X) = \{Y\setsep Y\subset X\}$.

All topological spaces are assumed to be Hausdorff. Let $T$ be a topological space. The closure of a set $A$ we denote by
$\overline{A}$. We denote the topology of $T$ by $\tau(T)$ and $\tau(x,T) = \{U\in\tau(T)\setsep x\in U\}$ for any $x\in T$. A family $\N$ of subsets of $T$ is an \emph{external network} of $A$ in $T$ if for any $a\in A$ and $U\in\tau(a,T)$ there exists $N\in\N$ such that $a\in N\subset U$.

Given an infinite cardinal $\kappa$ say that a space $T$ is \emph{monotonically $\kappa$-monolithic} if, to any set $A\subset T$ with $|A|\leq\kappa$, we can assign an external network $\O(A)$ to the set $\overline{A}$ in such a way that the following conditions are satisfied:
\begin{enumerate}[\upshape (i)]
	\item $|\O(A)|\leq |A| + \omega$;
	\item if $A\subset B\subset T$ and $|B|\leq\kappa$ then $\O(A)\subset \O(B)$;
	\item if $\lambda\leq\kappa$ is a cardinal and we have a family $\{A_\alpha\setsep \alpha < \lambda\}\subset [X]^{\leq\kappa}$ such that $\alpha < \beta < \lambda$ implies $A_\alpha\subset A_\beta$ then $\O(\bigcup_{\alpha < \lambda}A_\alpha) = \bigcup_{\alpha < \lambda}\O(A_\alpha)$.
\end{enumerate}
The space $T$ is \emph{monotonically monolithic} if it is monotonically $\kappa$-monolithic for any infinite cardinal $\kappa$.

Topological space $T$ is a \emph{Collins-Roscoe space} if for each $x\in T$, one can assign a countable family $\O(x)$ of subsets of $T$ such that, for any $A\subset T$, $\bigcup\{\O(x)\setsep x\in A\}$ is an external network for $\overline{A}$.

Let $\Gamma$ be a set. We put $\Sigma(\Gamma) = \{x\in\er^\Gamma:\;|\{\gamma\in\Gamma:\;x(\gamma)\neq 0\}|\leq\omega\}$. A compact space $K$ is \emph{Corson compact} if there is a homeomorphic embedding of $K$ into $\Sigma(\Gamma)$ for some set $\Gamma$.

\begin{definition}Let $X, Y$ be sets, $\O\subset\exp(X)$ closed under countable increasing unions, $\N\subset\exp(Y)$ and $f:\O\to\N$. We say that $f$ is \emph{$\omega$-monotone} if
\begin{enumerate}[\upshape (i)]
	\item $f(A)$ is countable for every countable $A\in\O$;
	\item if $A\subset B$ and $A,B\in\O$ then $f(A)\subset f(B)$;
	\item if $\{A_n\setsep n\in\omega\}\subset \O$ and $A_n\subset A_{n+1}$ for every $n\in\omega$ then $f(\bigcup_{n\in\omega}A_n) = \bigcup_{n\in\omega}f(A_n)$.
\end{enumerate}
\end{definition}

\begin{definition}A space $T$ is \emph{monotonically Sokolov} if we can assign to any countable family $\F$ of closed subsets of $T$ a continuous retraction $r_\F:T\to T$ and a countable external network $\N(\F)$ for $r_\F(T)$ in $T$ such that $r_\F(F)\subset F$ for every $F\in\F$ and the assignment $\N$ is $\omega$-monotone.
\end{definition}

\section{proofs of the main results}
The following proposition is the key tool to prove Theorem \ref{tMain}. The idea of the proof is moreover in following the lines of the proof Lemma 2.4 (a) of \cite{kubisSmall}. In order to obtain the $\omega$-monotonicity, we use a fixed ``Skolem function'' (see e.g. \cite[Section 2]{cuthKalenda}) to construct the elementary submodels from \cite[Lemma 2.4]{kubisSmall}.

\begin{proposition}\label{pMain}
Let $K$ be a Corson compact space. Then, to any countable family $\F$ of closed subsets of $K$ we can assign a countable set $M(\F)\subset K$ and a retraction $r_\F$ such that
  \begin{enumerate}[\upshape (i)]
    \item $r_\F(F)\subset F$ for every $F\in\F$,
    \item $r_\F(K) = \overline{M(\F)}$ and
    \item the assignment $\F\mapsto M(\F)$ is $\omega$-monotone.
  \end{enumerate}
\end{proposition}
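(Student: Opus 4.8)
The plan is to follow the elementary-submodel construction of \cite[Lemma 2.4]{kubisSmall}, making it $\omega$-monotone by fixing in advance a single Skolem function on $K$. Fix a homeomorphic embedding $K\subset\Sigma(\Gamma)$ for some set $\Gamma$. First I would fix a sufficiently large regular cardinal $\theta$ and a well-ordering (or a fixed Skolem function $\psi$) on $H(\theta)$, together with a fixed enumeration machinery, so that for any countable set $S\subset H(\theta)$ there is a canonically-defined countable elementary submodel $\M(S)\prec (H(\theta),\in,\psi)$ with $S\subset\M(S)$; the point of fixing $\psi$ once and for all is that $S\mapsto\M(S)$ will then automatically be monotone in $S$ and commute with countable increasing unions. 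Given a countable family $\F$ of closed subsets of $K$, I would form the countable set $S_\F=\{K,\Gamma,\F\}\cup\{F:F\in\F\}$ (coding $\F$ and its members as elements of $H(\theta)$) and set $\M(\F):=\M(S_\F)$.

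Next I would define $M(\F):=\{x\in K : x\in\M(\F)\}$, a countable subset of $K$ since $\M(\F)$ is countable, and let $r_\F:K\to K$ be the natural retraction onto $\overline{M(\F)}$ associated to the submodel. Concretely, using that $K\subset\Sigma(\Gamma)$, put $\Gamma_\F:=\Gamma\cap\M(\F)$ and let $p_\F:\Sigma(\Gamma)\to\Sigma(\Gamma)$ be the projection killing all coordinates outside $\Gamma_\F$; the standard elementarity argument from \cite{kubisSmall} shows $p_\F(K)\subset K$, that $p_\F\restriction K$ is a retraction, and that $p_\F(K)=\overline{M(\F)}$, which gives (ii). For (i), I would use elementarity: each $F\in\F$ belongs to $\M(\F)$, and a Corson compact set is determined by countably many coordinates near each point, so one checks $p_\F(F)\subset F$ — this is exactly the content of the corresponding step in Lemma 2.4 of \cite{kubisSmall}, now just carried out inside our canonically chosen model.

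Finally, for (iii) I would verify the three conditions in the definition of $\omega$-monotone for $\F\mapsto M(\F)$. Countability for countable $\F$ is immediate from countability of $\M(\F)$. Monotonicity ($\F\subset\F'$ implies $M(\F)\subset M(\F')$) follows because $S_\F\subset S_{\F'}$ forces $\M(S_\F)\subset\M(S_{\F'})$ once the Skolem function is fixed. For the countable-increasing-union condition, if $\F_n\subset\F_{n+1}$ then $S_{\bigcup_n\F_n}=\bigcup_n S_{\F_n}$ is a countable increasing union, and a union of an increasing chain of elementary submodels (all elementary in the same structure $(H(\theta),\in,\psi)$) is again such a submodel and equals $\M$ of the union; intersecting with $K$ gives $M(\bigcup_n\F_n)=\bigcup_n M(\F_n)$. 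The main obstacle I anticipate is purely bookkeeping: setting up the fixed Skolem function and the coding of $\F$ as an element of $H(\theta)$ so that the map $\F\mapsto\M(\F)$ genuinely commutes with countable unions (one must be careful that taking the model of a union of the coding-sets, rather than the union of the models, yields the same object — this is where the uniformity of $\psi$ is essential), and then checking that the retraction $r_\F$ and its target $\overline{M(\F)}$ depend on $\F$ only through $M(\F)$. Once that is in place, properties (i) and (ii) are the classical Corson-compact elementary-submodel facts.
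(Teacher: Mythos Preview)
Your approach is essentially the one the paper has in mind: a Skolem-hull construction making the Kubi\'s--Michalewski retraction $\omega$-monotone. The paper, however, unwinds the elementary-submodel machinery into an explicit recursion rather than working inside $H(\theta)$: it fixes in advance a point $x_F\in F$ for each nonempty closed $F\subset K$ and a witness $x(F,\gamma_1,\dots,\gamma_k;U_1,\dots,U_k)\in F\cap[\gamma_1,\dots,\gamma_k;U_1,\dots,U_k]$ whenever this set is nonempty (these \emph{are} the Skolem functions, written out concretely), then builds $M(\F)=\bigcup_n M_n(\F)$ with $M_0(\F)=\{x_F:F\in\F\cup\{K\}\}$ and $M_{n+1}(\F)$ adjoining all such witnesses whose coordinates lie in $\supp(M_n(\F))$. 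The retraction is $r_\F(x)=x\restriction_{\supp(M(\F))}$, and the key inclusion $r_\F(F)\subset\overline{F\cap M(\F)}$ is verified directly. This yields a self-contained proof with no set-theoretic prerequisites; your version is quicker for readers fluent with elementary submodels, and the two are the same argument at different levels of abstraction.

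One genuine slip in your bookkeeping: as written, $S_\F=\{K,\Gamma,\F\}\cup\{F:F\in\F\}$ contains $\F$ itself as an element, so $\F\subsetneq\F'$ does \emph{not} give $S_\F\subset S_{\F'}$ (since $\F\in S_\F$ but $\F\notin S_{\F'}$), and your claimed identity $S_{\bigcup_n\F_n}=\bigcup_n S_{\F_n}$ fails for the same reason. Simply drop $\F$ and take $S_\F=\{K,\Gamma\}\cup\F$; you never need $\F$ as a single object in the model, only each of its members, and with this change monotonicity and the countable-union property for $\F\mapsto S_\F$ (hence for $\F\mapsto M(\F)$) become immediate.
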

\begin{proof}In the proof we denote by $\B$ the set of all the rational open intervals in $\er$. Without loss of generality we may assume that $K\subset\Sigma(\Gamma)$ for some set $\Gamma$. If $\gamma_1,\ldots,\gamma_n\in\Gamma$ and $U_1,\ldots,U_n\in\B$, we put
$$[\gamma_1,\gamma_2,\ldots,\gamma_n;U_1,U_2,\ldots,U_n] = \{x\in\er^\Gamma:\;x(\gamma_i)\in U_i\text{ for any }i=1,2,\ldots,n\}.$$
For $S\subset K$ we denote by $\supp(S)$ the set of all $\gamma\in\Gamma$ such that $s(\gamma)\neq 0$ for some $s\in S$. Note that $\supp(S)$ is countable whenever $S\subset K$ is countable. For $x\in K$ and $A\subset\Gamma$ we denote by $x\restriction_A$ the point in $\Sigma(\Gamma)$ defined by $x\restriction_A(\gamma) = x(\gamma)$ for $\gamma\in A$ and $x\restriction_A(\gamma) = 0$ for $\gamma\in\Gamma\setminus A$.

If $F$ is a non-empty closed subset of $K$, then we pick a point $x_F\in F$. For any $k\in\en$, $\gamma_1,\ldots,\gamma_k\in\Gamma$ and $U_1,\ldots,U_k\in\B$ we pick, if it exists, $x(F,\gamma_1,\ldots,\gamma_k;U_1,\ldots,U_k)\in F\cap[\gamma_1,\ldots,\gamma_k;U_1,\ldots,U_k]$.

Take a countable family $\F$ of closed subsets of $K$. We will recursively construct $M(\F)$. Let $M_0(\F) = \{x_F\setsep F\in\F\cup\{K\}\}$. Assume that $n\in\omega$ and we have countable sets $M_0(\F),\ldots,M_n(\F)$. Let
\begin{align*}
M_{n+1}(\F) = M_n(\F)\; \cup\; \bigcup_{F\in\F}\Bigl\{x(F,\gamma_1,\ldots,\gamma_k;U_1,\ldots,U_k)\setsep & \gamma_1,\ldots,\gamma_k\in \supp(M_n(\F)),\\ & U_1,\ldots,U_k\in\B, k\in\en\Bigr\}.
\end{align*}
Notice that $M_{n+1}(\F)\subset K$ is countable since the set $M_n(\F)$ is countable. We will prove that $M(\F) = \bigcup\{M_n(\F)\setsep n\in\omega\}$ and $r_\F(x) = x\restriction_{\supp(M(\F))},\; x\in K$ are as promised.

\begin{claim}$r_\F(F) \subset\overline{F\cap M(\F)}$ for every $F\in\F\cup\{K\}$
\end{claim}
\begin{proof} Take an arbitrary $F\in\F\cup\{K\}$, $x\in F$ and $W\in\tau(r_\F(x),[0,1]^\Gamma)$. There are $j\in\en$, $\gamma_1,\ldots,\gamma_j\in\Gamma$ and $U_1,\ldots,U_j\in\B$ such that $V = [\gamma_1,\ldots,\gamma_j;U_1,\ldots,U_j] \subset W$ and $r_\F(x)\in V$. It suffices to find some $y\in F\cap M(\F)\cap V$.

If $\{\gamma_1,\ldots,\gamma_j\}\cap \supp (M(\F)) = \emptyset$ then we put $y = x_F$. It is immediate that $y\in F\cap M(\F)$. Moreover, since $y\in M(\F)$, $y(\gamma) = 0$ for $\gamma\in\Gamma\setminus\supp(M(\F))\supset\{\gamma_1,\ldots,\gamma_j\}$; hence, $y(\gamma_i) = 0 = r_\F(x)(\gamma_i)$ for every $i\in\{1,\ldots,j\}$. Thus, $y\in V$.

Otherwise, find $k\in\en$ and $i_1,\ldots,i_k$ such that
$$\{\gamma_{i_1},\ldots,\gamma_{i_k}\} = \{\gamma_1,\ldots,\gamma_j\}\cap \supp (M(\F)).$$
Now it is enough to put $y = x(F,\gamma_{i_1},\ldots,\gamma_{i_k},U_{i_1},\ldots,U_{i_k})$ and observe that then $y\in F\cap M(\F)\cap V$.
\end{proof}
From the claim above it immediately follows that $r_\F:K\to K$ is a continuous retraction, $r_\F(K)\subset\overline{M(\F)}$ and $r_\F(F)\subset F$ for every $F\in\F$. Notice, that whenever $x\in M(\F)$, $\supp(x)\subset\supp(M(\F))$ and hence $r_\F(x) = x$. Consequently, $M(\F)\subset r_\F(K)$ and $r_\F(K) = \overline{M(\F)}$.
\begin{claim}The assignment $\F\mapsto M(\F)$ is $\omega$-monotone.
\end{claim}
\begin{proof}
It takes a straightforward induction to see that the set $M(\F)$ is countable for any countable family $\F$ of closed subsets of $K$ and the assignments $\F\mapsto M_n(\F)$ are $\omega$-monotone for every $n\in\omega$. Now it is easy to observe, e. g. by \cite[Proposition 4.3]{tkaRoj}, that the assignment $\F\mapsto M(\F) = \bigcup\{M_n(\F)\setsep n\in\omega\}$ is $\omega$-monotone.\end{proof}
\end{proof}
\begin{proof}[Proof of Theorem \ref{tMain}]It is immediate that (i)$\Rightarrow$(ii). Suppose that $K$ is monotonically $\omega$-monolithic. It follows from \cite[Corollary 2.2]{gruen} that $K$ must be Corson; hence, e.g. by \cite[Lemma 1.6]{kalendaSurvey}, it has a countable tightness. By \cite[Theorem 2.10]{tka13}, any monotonically $\omega$-monolithic space of countable tightness is monotonically monolithic; hence, we proved (ii)$\Rightarrow$(i). The implication (iii)$\Rightarrow$(ii) is proved in \cite[Proposition 4.4]{tkaRoj}.

Finally, suppose that $K$ is monotonically $\omega$-monolithic. By \cite[Corollary 2.2]{gruen}, $K$ is Corson. Now we can apply Proposition \ref{pMain} to convince ourselves that for any countable family $\F$ of closed subsets of $K$ we can choose a countable set $M(\F)\subset K$ and a retraction $r_\F$ such that $r_\F(K) = \overline{M(\F)}$, $r_\F(F)\subset F$ for any $F\in\F$ and the assignment $M$ is $\omega$-monotone. Since $K$ is monotonically $\omega$-monolithic, to each countable set $S\subset K$ we can assign a countable family $\O(S)\subset\exp(K)$ which is an external network of $\overline{S}$ in such a way that $\O$ is $\omega$-monotone. Let $\N(\F) = \O(M(\F))$. Then $\N(\F)$ is a countable external network of $r_\F(K) = \overline{M(\F)}$ in $K$ and the assignment $\N$ is $\omega$-monotone because it is a composition of $\omega$-monotone mappings. Hence, $K$  is monotonically Sokolov and (ii)$\Rightarrow$(iii) follows.
\end{proof}
\begin{proof}[Proof of Corollary \ref{cMain1}]This is an easy consequence of Theorem \ref{tMain} because every Collins-Roscoe space is monotonically monolithic; see e.g. \cite[Lemma 3.1]{gruen}.
\end{proof}
\begin{proof}[Proof of Corollary \ref{cMain2}]By \cite[Example 3.12]{tka13}, there exists a compact Collins-Roscoe space which is not Gul$'$ko. By Corollary \ref{cMain1}, every compact Collins-Roscoe is monotonically Sokolov.
\end{proof}

\section*{Acknowledgements}

The author would like to thank O. Kalenda for pointing out the paper \cite{tkaRoj} which initiated this work.

\def\cprime{$'$}


\begin{thebibliography}{10}

\bibitem{cuth}
Marek C{\'u}th.
\newblock Simultaneous projectional skeletons.
\newblock {\em J. Math. Anal. Appl.}, 411(1):19--29, 2014.

\bibitem{cuthKalenda}
Marek C\'uth and Ond{\v{r}}ej F.~K. Kalenda.
\newblock Rich families and elementary submodels.
\newblock accepted in Cent. Eur. J. Math. (2014), preprint avaiable at
  http://arxiv.org/abs/1308.1818.

\bibitem{gruen}
Gary Gruenhage.
\newblock Monotonically monolithic spaces, {C}orson compacts, and {$D$}-spaces.
\newblock {\em Topology Appl.}, 159(6):1559--1564, 2012.

\bibitem{gul}
S.~P. Gul{\cprime}ko.
\newblock The structure of spaces of continuous functions and their hereditary
  paracompactness.
\newblock {\em Uspekhi Mat. Nauk}, 34(6(210)):33--40, 1979.

\bibitem{kalendaSurvey}
Ond{\v{r}}ej F.~K. Kalenda.
\newblock Valdivia compact spaces in topology and {B}anach space theory.
\newblock {\em Extracta Math.}, 15(1):1--85, 2000.

\bibitem{kubisSmall}
Wies{\l}aw Kubi{\'s} and Henryk Michalewski.
\newblock Small {V}aldivia compact spaces.
\newblock {\em Topology Appl.}, 153(14):2560--2573, 2006.

\bibitem{tkaRoj}
R.~Rojas-Hern{\'a}ndez and V.~V. Tkachuk.
\newblock A monotone version of the {S}okolov property and monotone
  retractability in function spaces.
\newblock {\em J. Math. Anal. Appl.}, 412(1):125--137, 2014.

\bibitem{tka05}
Vladimir~V. Tkachuk.
\newblock A nice class extracted from {$C_p$}-theory.
\newblock {\em Comment. Math. Univ. Carolin.}, 46(3):503--513, 2005.

\bibitem{tka07}
Vladimir~V. Tkachuk.
\newblock Condensing function spaces into {$\Sigma$}-products of real lines.
\newblock {\em Houston J. Math.}, 33(1):209--228 (electronic), 2007.

\bibitem{tka13}
Vladimir~V. Tkachuk.
\newblock Lifting the {C}ollins-{R}oscoe property by condensations.
\newblock {\em Topology Proc.}, 42:1--15, 2013.

\end{thebibliography}
\end{document}